\newtheorem{thm}{Theorem}[section]
\newtheorem*{coroa}{Corollary A}
\newtheorem*{corob}{Corollary B}
\newtheorem*{coroc}{Corollary C}
\newtheorem*{thmd}{Theorem D}
\newtheorem{lem}[thm]{Lemma}
\newtheorem{quest}[thm]{Question}
\theoremstyle{definition}
\theoremstyle{remark}
\numberwithin{equation}{section}
\title{On the normality of secant varieties}
\author{Brooke Ullery}
\address{Department of Mathematics\\ University of Michigan\\ 530 Church Street,
Ann Arbor, MI  48109-1043}
\email{bullery@umich.edu}
\newcommand{\Sym}{{{\textrm{Sym}}}}
\newcommand{\Supp}{{{\textrm{Supp}}}}
\newcommand{\A}{{{{\mathcal{A}}}}}
\newcommand{\B}{{{{\mathcal{B}}}}}
\newcommand{\I}{\mathcal{I}}
\renewcommand{\P}{{{\mathbb{P}}}}
\newcommand{\E}{{{\mathcal{E}_\L}}}
\renewcommand{\L}{{{\mathcal{L}}}}
\newcommand{\iv}{{{^{-1}}}}
\newcommand{\newword}[1]{\textbf{\emph{#1}}}
\newcommand{\str}{\mathcal{O}}
\newcommand{\sct}{\Sigma(X,\L)}
\renewcommand{\H}{X^{[2]}}
\newcommand{\surj}{\twoheadrightarrow}
\newcommand{\U}{{N^*_{F_x/\P(\E)}}}
\newcommand{\Ud}{{N_{F_x/\P(\E)}}}
\newcommand{\T}{{T_{\P(\E)/\H}\big|_{F_x}}}
\newcommand{\PE}{{\P(\E)}}
\newcommand{\arinj}{\ar@{^{(}->}}
\newcommand{\arsurj}{\ar@{->>}}
\begin{document}

\maketitle 

\section*{Introduction}
The purpose of this paper is to show that the secant variety to a projective variety embedded by a sufficiently positive line bundle is a normal variety.  In particular, this confirms the vision and completes the results of Vermeire in \cite{V} and renders unconditional the results in \cite{SV2}, \cite{SV}, \cite{V5}, and \cite{V4}.\footnote{This question of the normality of the secant variety came up in 2001 when a proof was proposed by Vermeire \cite{V}.  However, in 2011, Adam Ginensky and Mohan Kumar pointed out that the proof was erroneous, as explained in Remark 4 of \cite{SV}.}

Let $$X \subset \P(H^0(X,\L)) = \P^r$$  be a smooth variety over an algebraically closed field of characteristic zero, embedded by the complete linear system corresponding to a very ample line bundle $\L$.  We define the secant variety $$\Sigma(X, \L) \subset \P^n$$  to be the Zariski closure of the union of 2-secant lines to X in $\P^r$.  As secant varieties are classical constructions in algebraic geometry, there has been a great deal of work done in an attempt to understand their geometry.  Recently, there has been interest in determining defining equations and syzygies of secant varieties \cite{CGG} \cite{CGG2} \cite{CS} \cite{SV2} \cite{SV} \cite{V2}, motivated in part by questions in algebraic statistics \cite{GSS} \cite{SS} and algebraic complexity theory \cite{L} \cite{LW}.  In this paper, we focus on the singularities of secant varieties, using the comprehensive geometric description developed by Bertram \cite{B} and Vermeire \cite{V}.

If the embedding line bundle $\L$ is not sufficiently positive, the behavior of the singularities of $\Sigma(X, \L)$ can be quite complicated.  For example, the secant variety is generally singular along $X$, but if four points of $X$ lie on a plane, then three pairs of secant lines will intersect away from $X$. In some cases this will create additional singularities at those intersection points on $\sct$.  In more degenerate cases, the secant variety may simply fill the whole projective space, e.g. the secant variety to any non-linear plane curve. However, if $\L$ is sufficiently positive, we will see that $\sct$ will be singular precisely along $X$.  As $\L$ becomes increasingly positive, it is natural to predict that the singularities of $\sct$ will become easier to control.

We start by stating some concrete special cases of the main theorem.  In the case of curves, normality of the secant variety only depends on a degree condition:

\begin{coroa}
Let $X$ be a smooth projective curve of genus $g$ and $\L$ a line bundle on $X$ of degree $d$.  If $d\geq 2g+3$, then $\sct$ is a normal variety.
\end{coroa}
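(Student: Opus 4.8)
The plan is to derive Corollary A from the main theorem by checking its hypotheses in the curve case, so let me first isolate what the bound $d \ge 2g+3$ provides and then recall the mechanism by which that positivity forces normality.

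\emph{The degree bound.} On a smooth curve, $d \ge 2g+3$ is exactly the threshold at which $\L$ is $3$-very ample: for every length-$4$ subscheme $\xi \subset X$ the restriction $H^0(X,\L) \to H^0(\xi,\L|_\xi)$ is surjective, equivalently $H^1(X,\L(-D)) = 0$ for every effective $D$ with $\deg D \le 4$, since then $\deg\L(-D) \ge 2g-1$ and one invokes Riemann--Roch and Serre duality. In particular $\L$ is very ample, $X \subset \P^r$ is projectively normal (already for $d \ge 2g+1$), no four points of $X$ lie on a plane, and the higher-cohomology vanishings needed below on $\H = X^{[2]}$ and on the secant bundle are automatic. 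These are the hypotheses of the main theorem specialized to curves, so Corollary A follows once the main theorem is in hand.

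\emph{The mechanism.} Recall the Bertram--Vermeire model: with $\E = \E_\L = p_*(q^*\L \otimes \str_{\mathcal D})$ for the universal length-$2$ subscheme $\mathcal D \subset X \times \H$ (projections $p,q$), $\E$ is a rank-$2$ bundle on the smooth surface $\H$, the surjection $H^0(X,\L) \otimes \str_\H \surj \E$ (surjective since $d \ge 2g+1$) induces a morphism $\pi\colon \PE \to \P^r$ with image $\sct$, and $\PE$ is a $\P^1$-bundle over $\H$, hence smooth. Since $\L$ separates length-$3$ schemes and no four points of $X$ are coplanar, $\pi$ is birational onto $\sct$ and an isomorphism over $\sct \setminus X$, and $F_x = \pi^{-1}(x) \cong X$ for $x \in X$, embedded in $\PE$ with codimension $2$ (in fact $\pi$ contracts a divisor $E \subset \PE$ over $X$, with reduced structure $X \times X$, onto $X$ via the first projection). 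As $\PE$ is smooth and $\pi$ proper birational, Stein factorization identifies $\Spec_{\sct}\pi_*\str_{\PE}$ with the normalization of $\sct$, so \textbf{$\sct$ is normal if and only if the canonical map $\str_{\sct} \to \pi_*\str_{\PE}$ is an isomorphism}. This is local along $X$, and I would settle it via the theorem on formal functions: one must show $\varprojlim_m H^0(F_x^{(m)},\str) = \widehat{\str}_{\sct,x}$ for the infinitesimal neighbourhoods $F_x^{(m)}$ of $F_x$ in $\PE$. This is where the conormal geometry enters: one filters the quotients $\str_{F_x^{(m)}}$ by symmetric powers of the conormal bundle $\U$, computes the relative tangent bundle $\T$ and identifies $\U$ and $\Ud$ explicitly as $\L$-twisted bundles on $X \cong F_x$, and then reads the required cohomology off the nonspeciality of $\L(-D)$ for $\deg D \le 4$.

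The main obstacle is precisely this computation: showing that the infinitesimal neighbourhoods of $F_x$ in $\PE$ acquire no regular functions beyond those pulled back from $\sct$, i.e. $R^0\pi_*\str_{\PE}/\str_{\sct} = 0$. What makes it delicate is that $\pi$ contracts the divisor $E \cong X \times X$ to the curve $X$, so $\pi$ is nowhere a submersion along $F_x$ and the contraction is neither a blow-up nor a projective-bundle collapse when $g \ge 1$; the graded pieces of the filtration on $\str_{F_x^{(m)}}$ and the connecting maps between consecutive levels must be controlled uniformly in $m$, and it is $3$-very ampleness that keeps this tame. Once the groups $H^i(F_x,\Sym^m\U \otimes \cdots)$ are pinned down, the normality of $\sct$, and hence Corollary A, is immediate; with weaker positivity secant lines begin to meet off $X$, extra formal functions appear, and normality genuinely fails, consistent with $2g+3$ being sharp.
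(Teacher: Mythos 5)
Your overall strategy is the right one and matches the paper: Corollary A is deduced from Theorem D by verifying its hypotheses for a degree-$d$ line bundle on a curve, and your check of $3$-very ampleness (via $\deg\L(-D)\geq 2g-1$ for $\deg D=4$, hence nonspeciality and Riemann--Roch) is exactly the paper's first step. However, there is a genuine gap: you never verify the second, and more substantive, hypothesis of Theorem D, namely that for every $x\in X$ and every $i>0$ the map $\Sym^i H^0(\L\otimes m_x^2)\to H^0(\L^{\otimes i}\otimes m_x^{2i})$ is surjective --- equivalently, for a curve, that $\L(-2x)$ is \emph{normally generated}. The list of consequences you offer in its place (very ampleness, projective normality of $X\subset\P^r$ under $\L$ itself, no four coplanar points, unspecified ``higher-cohomology vanishings'') does not contain this condition; in particular, normal generation of $\L$ is not the same as normal generation of $\L(-2x)$, and it is the latter that Theorem D requires. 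The paper closes this in one line: $\deg\L(-2x)=d-2\geq 2g+1$, so the Castelnuovo--Mattuck--Mumford theorem gives normal generation of $\L(-2x)$. Note that this is precisely where the threshold $2g+3$ is used a second time, independently of the $3$-very ampleness computation; as written, your argument does not explain why $2g+3$ (rather than, say, the $2g+2$ needed for $3$-very ampleness alone via a sharper count) suffices for the second hypothesis.

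A secondary remark: the long ``mechanism'' paragraph re-narrates the proof of Theorem D (resolution $t:\P(\E)\to\sct$, formal functions along $F_x$, filtration by $\Sym^k N^*_{F_x/\P(\E)}$) rather than proving anything new about Corollary A; since you are invoking the main theorem as a black box, none of that is needed, and it does not substitute for the missing hypothesis check. Also, your closing claim that normality ``genuinely fails'' below $2g+3$ is not established by (or claimed in) the paper, which only gives sufficient conditions; you should not assert sharpness without an argument.
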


Moreover, in the example of canonical curves, we have a stronger result not covered by the above proposition:

\begin{corob}
Let $X$ be a smooth projective curve with Clifford index $\emph{Cliff}(X)\geq 3$.  Then $\Sigma(X,\omega_X)$ is a normal variety.
\end{corob}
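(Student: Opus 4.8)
The plan is to obtain Corollary B as the case $\L=\omega_X$ of the main normality theorem of the paper: that theorem makes $\sct$ normal under a positivity hypothesis on $\L$ which, for a curve, should reduce to $3$-very ampleness of $\L$ together with an $N_2$-type vanishing condition, and degree $\geq 2g+3$ forces exactly these (which is what yields Corollary A). The content of Corollary B is therefore to check that $\mathrm{Cliff}(X)\geq 3$ implies this hypothesis for $\L=\omega_X$; this is the whole point, since $\deg\omega_X=2g-2$ is far below the range of Corollary A and the conclusion does not follow from $3$-very ampleness of $\omega_X$ alone.

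First I would record the qualitative consequences of $\mathrm{Cliff}(X)\geq 3$: it forces $\mathrm{gon}(X)\geq 5$, so $X$ is neither hyperelliptic, trigonal, nor tetragonal (in particular not a smooth plane quintic), whence $\omega_X$ is very ample and even $3$-very ample, and the canonical curve has no trisecant lines and no four coplanar points. Thus $\sct$ is singular precisely along $X$ and the Bertram--Vermeire model $\psi\colon\PE\to\sct$ is a resolution of singularities which is an isomorphism over $\sct\setminus X$ with fiber $F_x\cong X$ over each $x\in X$. Since $\mathrm{Sing}(\sct)=X$ has codimension two, $\sct$ is automatically regular in codimension one, so normality is equivalent to Serre's condition $S_2$; and the main theorem obtains $S_2$---equivalently, the identity $\psi_*\str_{\PE}=\str_{\sct}$, which forces normality once $\psi$ is factored through the normalization---from a family of cohomological vanishings along the fibers $F_x$, phrased through the conormal bundle $\U$ of $F_x$ in $\PE$ and the relative tangent bundle $\T$ of $\PE\to\H$ along $F_x$. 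For $\L=\omega_X$ these bundles on $F_x\cong X$ unwind into explicit combinations of $\omega_X$ with ideal sheaves of points of $X$.

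The hard part is then exactly this family of vanishings. The plan is to dualize each $H^1(F_x,-)$ by Serre duality on $X$ and reduce it to the statement that a suitable finite subscheme imposes independent conditions on $|\omega_X|$---equivalently, that every line bundle $A$ on $X$ with $h^0(A)\geq 2$ and $h^1(A)\geq 2$ satisfies $\deg A-2(h^0(A)-1)\geq 3$---uniformly in $x\in X$. The Clifford-index hypothesis is exactly what excludes the low-degree pencils and nets that would obstruct these vanishings, in the same spirit as its control of Koszul cohomology in the known cases of Green's conjecture for canonical curves. Once these pointwise estimates are in hand, assembling them into $\psi_*\str_{\PE}=\str_{\sct}$, hence into normality of $\sct$, is formal from the main theorem's setup; the uniform-in-$x$ Clifford-index control of the cohomology on $F_x\cong X$ is the only genuinely new input, beyond the main theorem and classical curve theory.
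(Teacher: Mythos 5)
Your reduction is set up correctly: the whole content of Corollary B is to verify the two hypotheses of Theorem D for $\L=\omega_X$, and your derivation of $3$-very ampleness from $\mathrm{Cliff}(X)\geq 3$ (no $g^1_4$, hence every effective degree-$4$ divisor imposes independent conditions on $|\omega_X|$ by Riemann--Roch) matches the paper. But the second hypothesis is where the actual work lies, and your proposal does not carry it out. For a curve, that hypothesis is precisely the normal generation of $\omega_X(-2x)$ for every $x\in X$, i.e.\ the surjectivity of $\Sym^i H^0(\omega_X(-2x))\to H^0(\omega_X^{\otimes i}(-2ix))$ for all $i$. Your plan --- ``dualize each $H^1(F_x,-)$ by Serre duality and reduce to the statement that a suitable finite subscheme imposes independent conditions on $|\omega_X|$'' --- never identifies which multiplication maps must be surjective, and the inequality you propose to reduce to ($\deg A-2(h^0(A)-1)\geq 3$ for all $A$ with $h^0(A),h^1(A)\geq 2$) is literally the definition of $\mathrm{Cliff}(X)\geq 3$; the reduction itself, which is the entire content, is only gestured at ``in the same spirit as'' Koszul-cohomology arguments. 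As written, the key quantitative step is missing.

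The paper closes this gap with a single citation: the Green--Lazarsfeld normal generation theorem (Theorem 1 of \cite{GL}), which says that a very ample line bundle $L$ with $\deg L\geq 2g+1-2h^1(L)-\mathrm{Cliff}(X)$ is normally generated. Applied to $L=\omega_X(-2x)$, one has $\deg L=2g-4$ and, by Serre duality, $h^1(L)=h^0(\str_X(2x))=1$ since $X$ is not hyperelliptic; the inequality then reads $2g-4\geq 2g-1-\mathrm{Cliff}(X)$, i.e.\ $\mathrm{Cliff}(X)\geq 3$, exactly the hypothesis. (Very ampleness of $\omega_X(-2x)$ again uses the absence of a $g^1_4$.) If you want to avoid quoting Green--Lazarsfeld you would have to reprove a case of it, which is substantially harder than anything else in the corollary; your Serre-duality sketch does not substitute for it. One further small point: the classification of curves with $\mathrm{Cliff}(X)\leq 2$ includes plane sextics, which have no $g^1_4$ but fail the normal generation hypothesis, so ``gonality $\geq 5$'' alone is not an accurate paraphrase of $\mathrm{Cliff}(X)\geq 3$ --- the exclusion of plane sextics is genuinely needed and enters through the Clifford index bound in the Green--Lazarsfeld inequality, not through $3$-very ampleness.
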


In particular, the above implies that the secant variety to a general canonical curve of genus at least 7 is normal.

More generally, we can also give a positivity condition on embeddings of higher dimensional varieties to ensure that the secant variety is normal:

\begin{coroc}
Let $X$ be a smooth projective variety of dimension $n$. Let $\A$ and $\B$ be very ample and nef, respectively, and $$\L=\omega_X \otimes  \A^{\otimes 2(n+1)} \otimes \B.$$ Then $\sct$ is a normal variety.
\end{coroc}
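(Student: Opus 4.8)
The plan is to obtain Corollary~C as an immediate application of the main theorem of the paper, which (once established) asserts that $\sct$ is normal provided $\L$ is sufficiently positive. That positivity hypothesis is a bounded separation-of-jets condition on $\L$ --- the sort of condition one needs in order to control the fibres of the resolution $\PE \to \sct$ and the higher cohomology of the tautological sheaf $\E = \E_\L$ on the Hilbert scheme $\H$ of two points on $X$. I do not plan to revisit that geometry here; the whole content of the corollary is the verification that the adjoint-type line bundle
\[
\L \;=\; \omega_X \otimes \A^{\otimes 2(n+1)} \otimes \B
\]
meets the required positivity. I will prove the clean statement that $\L$ is $(n+1)$-jet ample, which is more than enough: however the hypothesis of the main theorem is ultimately phrased --- as jet ampleness, as $k$-very ampleness, or as the vanishing of $H^1$ of $\L$ twisted by the ideal sheaf of a short subscheme --- it is implied by $(n+1)$-jet ampleness of $\L$.

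The verification is in two steps. First, $\omega_X \otimes \A^{\otimes(n+1)} \otimes \B$ is globally generated: for $1 \le i \le n$ the bundle $\A^{\otimes(n+1-i)} \otimes \B$ is ample, being the product of an ample and a nef bundle, so in characteristic zero Kawamata--Viehweg (indeed already Kodaira) vanishing gives $H^i\big(X,\, \omega_X \otimes \A^{\otimes(n+1-i)} \otimes \B\big)=0$, while for $i>n$ this group vanishes trivially. Hence $\omega_X \otimes \A^{\otimes(n+1)} \otimes \B$ is $0$-regular with respect to the globally generated line bundle $\A$, and Castelnuovo--Mumford regularity yields global generation. Second, one tensors up: $\A$ is very ample, hence $1$-jet ample, so $\A^{\otimes(n+1)}$ is $(n+1)$-jet ample by the additivity of $k$-jet ampleness under tensor products. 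Since a globally generated line bundle is $0$-jet ample and the tensor product of a $0$-jet ample and an $(n+1)$-jet ample line bundle is $(n+1)$-jet ample, the factorization $\L = \big(\omega_X \otimes \A^{\otimes(n+1)} \otimes \B\big) \otimes \A^{\otimes(n+1)}$ shows $\L$ is $(n+1)$-jet ample. In particular $\L$ is very ample, so the embedding $X\subset\P(H^0(X,\L))$ of the introduction is defined and the main theorem applies, giving the normality of $\sct$.

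The real difficulty of the paper lies not in this deduction but in the main theorem itself --- building the resolution of $\sct$ and carrying out the cohomological bookkeeping that forces normality. Within Corollary~C the only points that call for any care are the tracking of the numerical exponent, for which the additivity of jet ampleness is the efficient tool, and the fact that $\B$ is merely nef rather than ample, which is exactly why the first step is routed through Kawamata--Viehweg vanishing rather than plain Kodaira. I would not expect $2(n+1)$ to be the optimal exponent --- Corollaries~A and~B already exhibit much weaker sufficient positivity for curves --- but it is uniform in $\dim X$ and elementary to check.
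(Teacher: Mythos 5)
There is a genuine gap here: you have mischaracterized what the main theorem requires. Theorem D has \emph{two} hypotheses: (i) $\L$ is $3$-very ample, and (ii) for every $x\in X$ and every $i>0$ the map $\Sym^i H^0(\L\otimes m_x^2)\to H^0(\L^{\otimes i}\otimes m_x^{2i})$ is surjective, i.e.\ $b_x^*\L(-2E_x)$ is normally generated on the blow-up of $X$ at $x$. The second hypothesis is \emph{not} ``a bounded separation-of-jets condition on $\L$'': it is a projective normality statement, demanding surjectivity of multiplication maps on sections of \emph{all} tensor powers of $\L(-2x)$, and no fixed order of jet ampleness of $\L$ implies it (already for curves, $k$-jet ampleness at a point says nothing about whether $\L(-2x)$ is normally generated; that is governed by degree or Clifford-index bounds as in Corollaries A and B). Your argument establishes only that $\L$ is $(n+1)$-jet ample. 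That does cover hypothesis (i) for $n\geq 2$, by a legitimate route different from the paper's --- the paper instead quotes very ampleness of $\omega_X\otimes\A^{\otimes k}\otimes\B$ for $k\geq n+2$ together with the additivity of $k$-very ampleness under tensor products from \cite{HTT} --- but hypothesis (ii) is left entirely unaddressed, and it is the harder half.

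Hypothesis (ii) is also precisely where the exponent $2(n+1)$ is used. The paper's proof computes, on $\widetilde{X}=bl_x X$,
$$b_x^*\L(-2E_x)\;=\;\omega_{\widetilde{X}}\otimes\bigl(b_x^*\A^{\otimes 2}(-E_x)\bigr)^{\otimes(n+1)}\otimes b_x^*\B,$$
checks that $b_x^*\A^{\otimes 2}(-E_x)$ is very ample and $b_x^*\B$ is nef, and then invokes the Ein--Lazarsfeld theorem that adjoint bundles $\omega\otimes\mathcal{M}^{\otimes(n+1)}\otimes\mathcal{N}$ with $\mathcal{M}$ very ample and $\mathcal{N}$ nef are normally generated. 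Your closing remark that $2(n+1)$ is surely not optimal is a symptom of the omission: an exponent of about $n+4$ would already give $3$-very ampleness, and it is the normal generation step on the blow-up (needing $n+1$ copies of the very ample bundle $b_x^*\A^{\otimes 2}(-E_x)$, each of which costs two copies of $\A$) that forces the factor of $2$. To repair the proof you must add this computation and citation, or some substitute argument for the surjectivity of $\Sym^i H^0(\L\otimes m_x^2)\to H^0(\L^{\otimes i}\otimes m_x^{2i})$; you should also treat $n=1$ separately (as the paper does, via Corollary A), since $2$-jet ampleness alone does not give $3$-very ampleness there.
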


Before we state the main theorem, we must define $k$-very ampleness, a rough measure of the positivity of a line bundle:  

A line bundle $\L$ on $X$ is \newword{$k$-very ample} if every length $k+1$ 0-dimensional subscheme $\xi \subseteq X$  imposes independent conditions on $\L$, i.e. $$H^0(\L) \to H^0(\L \otimes \str_{\xi})$$ is surjective.\footnote{Some sources, e.g. \cite{SV}, \cite{SV2}, \cite{V}, and \cite{V2}, call this property $(k+1)$-very ampleness.}  In other words, $\L$ is 1-very ample if and only if it is very ample, and for any positive $k$, $\L$ is $k$-very ample if and only if no length $k+1$ 0-dimensional subscheme of $X$ lies on a $(k-1)$-plane in $\P(H^0(\L))$.

Our main result is the following:

\begin{thmd}
Let $X$ be a smooth projective variety, and $\L$ be a 3-very ample line bundle on $X$.  Let $m_x$ be the ideal sheaf of $x\in X$.  Suppose that for all $x\in X$ and $i>0$, the natural map $$\emph{\Sym}^i H^0 (\L \otimes {m}_x^{2}) \to H^0 (\L^{\otimes i} \otimes {m}_x^{2i})$$ is surjective.\footnote{Note that this map is surjective for every $i$ if and only if $b_x^*\L \otimes \str(-2E_x)$ (or simply $\L(-2x)$ when $X$ is a curve) is normally generated, where $b_x$ is the blow-up map of $X$ at $x$, and $E_x$ is the corresponding exceptional divisor.}
Then $\sct$ is a normal variety.
\end{thmd}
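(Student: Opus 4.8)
The plan is to work with the standard desingularization of $\Sigma:=\sct$ coming from the Hilbert scheme $\H$ of two points on $X$. Let $\E$ be the sheaf on $\H$ whose fibre over a length-two subscheme $\xi\subset X$ is $H^0(X,\L\otimes\str_\xi)$; since $\L$ is $3$-very ample this is locally free of rank two, and restriction of sections yields a surjection $H^0(X,\L)\otimes\str_\H\surj\E$, hence a morphism $f\colon\PE\to\P^r$ with $f^{*}\str_{\P^r}(1)=\str_{\PE}(1)$ and image $\Sigma$; write $\pi\colon\PE\to\H$ for the $\P^1$-bundle projection. By the geometric picture of Bertram \cite{B} and Vermeire \cite{V}, $3$-very ampleness guarantees that $f$ is a resolution of singularities, that $\Sigma$ is singular exactly along $X$, that $f$ is an isomorphism over $\Sigma\setminus X$, and that $\PE$ is the blow-up of $\Sigma$ along $X$; the exceptional divisor $E=f^{-1}(X)$ dominates $X$, and the fibre $F_x=f^{-1}(x)$ is isomorphic to the blow-up of $X$ at $x$ and sits inside $\PE$ with conormal bundle $\U$.

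The first step is to translate normality into a surjectivity statement for $f$. Since $\PE$ is smooth and $f$ is proper and birational, $f_{*}\str_{\PE}$ is the structure sheaf of the normalization of $\Sigma$, so $\Sigma$ is normal if and only if $f_{*}\str_{\PE}=\str_{\Sigma}$. The cokernel $\mathcal Q$ of $\str_{\Sigma}\inj f_{*}\str_{\PE}$ is coherent and supported on $X$ (because $f$ is an isomorphism off $X$), so it vanishes if and only if $H^{0}(\Sigma,\mathcal Q\otimes\L^{\otimes i})=0$ for $i\gg0$. Twisting the defining sequence of $\mathcal Q$ by $\str_{\P^r}(i)$, invoking Serre vanishing on $\Sigma$ to identify $H^{0}(\Sigma,\str_{\Sigma}(i))$ with the image of $\Sym^{i}H^{0}(X,\L)$, and using the projection formula together with $\pi_{*}\str_{\PE}(i)=\Sym^{i}\E$ to identify $H^{0}(\PE,\str_{\PE}(i))$ with $H^{0}(\H,\Sym^{i}\E)$, one finds that $\mathcal Q=0$ precisely when the multiplication map
\[
\Sym^{i}H^{0}(X,\L)\ \longrightarrow\ H^{0}\!\big(\H,\ \Sym^{i}\E\big)
\]
is surjective for $i\gg0$. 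Serre's criterion is then satisfied: the regularity in codimension one, $R_{1}$, is immediate because $X=\mathrm{Sing}\,\Sigma$ has codimension $n+1\ge2$ in $\Sigma$, and the displayed surjectivity is exactly what produces the $S_{2}$ property.

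It remains to prove the surjectivity, and this is the heart of the matter. As $H^{0}(X,\L)\otimes\str_{\H}\surj\E$ is surjective, so is $\Sym^{i}\!\big(H^{0}(X,\L)\otimes\str_{\H}\big)\surj\Sym^{i}\E$, and $f$ is an isomorphism away from $X$; hence the failure of surjectivity on global sections is concentrated along $X$, and I would peel it off by filtering the sheaves along the exceptional divisor $E$ — equivalently, along the infinitesimal neighbourhoods of the loci $F_x$ — to reduce to a fibrewise statement over each $F_x$. Since $\str_{\PE}(1)$ restricts trivially to $F_x$ (its image in $\P^r$ is the single point $x$), this fibrewise statement is controlled entirely by the symmetric powers of the conormal bundle $\U$, and a computation of $\U$ produces the twist by $-2E_x$, rather than $-E_x$, as the trace of the two-pointed nature of the secant construction: the fibrewise surjectivity becomes exactly the surjectivity of $\Sym^{i}H^{0}(N)\to H^{0}(N^{\otimes i})$ for $N=b_x^{*}\L\otimes\str(-2E_x)$ on the blow-up of $X$ at $x$, i.e. the normal generation hypothesized in the statement. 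Three-very ampleness is used a second time here, to kill the auxiliary cohomology on $\H$ that intervenes in comparing $\Sym^{i}\E$ with its restrictions along the $F_x$. I expect the genuine difficulty to be precisely this comparison — propagating the pointwise normal-generation hypotheses on the various blow-ups $b_x$ into the single global surjectivity onto $H^{0}(\H,\Sym^{i}\E)$ — which demands a careful analysis of $\Sym^{i}\E$ near $E$ and of how multiplication of sections interacts with the conormal bundles $\U$.
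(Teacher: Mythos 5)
Your setup and your first reduction are sound: the resolution $t:\PE\to\sct$, the equivalence of normality with $t_*\str_{\PE}=\str_{\sct}$, and even the identification of $b_x^*\L(-2E_x)$ inside the conormal bundle of $F_x$ all match what actually happens. (The aside about Serre's criterion is redundant --- once $t_*\str_{\PE}=\str_{\sct}$ you are done, since $\sct$ then coincides with its own normalization --- and your intermediate reformulation, surjectivity of $\Sym^i H^0(\L)\to H^0(\H,\Sym^i\E)$ for $i\gg 0$, while correct, is essentially asking for projective normality of $\sct$ in large degrees, a more global statement than is needed.)

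The genuine gap is the step you yourself flag as the ``heart of the matter'': you never produce a mechanism that localizes the surjectivity question to the individual fibers $F_x$. The filtration you propose does not exist as stated: the loci $F_x\subset\H$ sweep out all of $\H$ as $x$ varies, so there is no filtration of $\Sym^i\E$ on $\H$ ``along the infinitesimal neighbourhoods of the $F_x$,'' and filtering $\str_{\PE}(i)$ by powers of the ideal of the divisor $\Phi=t^{-1}(X)$ produces graded pieces living on all of $\Phi$, not on a single fiber, leaving a second, unaddressed descent over $X$. The tool that actually does this job is the theorem on formal functions applied to $t$ at a point $x\in X$: writing $\I_x$ for the ideal sheaf of $F_x$ in $\PE$, the completion of $t_*\str_{\PE}$ at $x$ is $\varprojlim H^0(\str_{\PE}/\I_x^k)$, so surjectivity of $\str_{\P^r}\to t_*\str_{\PE}$ (which suffices, and need only be checked at points of $X$) reduces --- via the snake lemma and the Mittag--Leffler condition on the inverse system of kernels --- to surjectivity of the graded maps $\Sym^k(T_x^*\P^r)\to H^0(\Sym^k\U)$ for all $k$. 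Two computations you only gesture at are then essential and nontrivial: the splitting $\U\cong\str_{F_x}^{\oplus n}\oplus b_x^*\L(-2E_x)$, obtained by identifying $\E\big|_{F_x}$ via base change along $\sigma$ and analyzing the ramification of $\Phi\to\H$ along $E_x$; and the fact that $T_x^*\P^r\to H^0(\U)$ is an isomorphism, proved by an injectivity argument with secant lines through $x$ together with the dimension count $h^0(b_x^*\L(-2E_x))=h^0(\L)-n-1$. Only with both in hand does the $k$-th graded surjectivity become equivalent to the normal generation hypothesis on $b_x^*\L(-2E_x)$. As written, your argument identifies the right answer but does not supply the reduction that makes it a proof.
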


Though the hypothesis is a bit abstract, we will show how the more accessible corollaries follow from the theorem in section 3.

I would like to thank my advisor, Rob Lazarsfeld, for suggesting the problem and for the hours of discussions along the way.  I would also like to thank my other advisor, Karen Smith, for her encouragement and many helpful discussions.  I am also grateful to Lawrence Ein, Ian Shipman, Jessica Sidman, and Pete Vermeire for their suggestions and fruitful conversations.

The work for this paper was partially supported by NSF RTG grant DMS 0943832

\section{The geometry of the secant variety}

First we will describe the geometric setup, detailed in the case of curves in \cite{B}, and extended to higher dimensions in \cite{V}.  Our notation will be the same as in the introduction.

Let $\L$ be a very ample line bundle on a smooth variety $X$ and $$X \hookrightarrow \P(H^0(\L)) = \P^r$$ the corresponding embedding, thinking of the points of $\P(H^0(\L))$ as the one dimensional quotients of $H^0(\L)$.  Let $X^{[2]}$ denote the Hilbert scheme of length 2 zero-dimensional subschemes of $X$.  Recall that $\H$ is smooth, and its universal subscheme is the incidence variety 
\begin{equation}\label{phi} \Phi  = \{(x,\xi) \in X \times \H : x \in \xi\} \cong bl_{\Delta}(X \times X),
\end{equation} the blowup of $X \times X$ along the diagonal.  Moreover, we have the Cartesian square 
\begin{equation} \xymatrix@=10pt{bl_{\Delta}(X \times X) \ar[r] \ar[d] & X\times X \ar[d] \\ \H \ar[r] & \Sym^2 X}\end{equation}
where the vertical arrows are quotients by the involution, and the horizontal maps are the natural ones.  Note that when $X$ is a curve, the horizontal maps are isomorphisms.

Let $q$ and $\sigma$ be the two projections as shown below:
$$\xymatrix@=10pt{& \Phi \ar[dl]_q \ar[dr]^{\sigma} &  
\\ X && \H }.$$
Define the vector bundle $$\E=\sigma_* q^* \L.$$  This rank two vector bundle is tautological in the sense that the fiber of $\E$ over $\xi \in \H$ consists of the global sections of $\L$ restricted to the corresponding subscheme of $X$.  That is, $$\textrm{fiber of } \E \textrm{ over } \xi = H^0(X, \L \otimes \str_\xi).$$
Thus, since $\L$ is very ample, the map $$H^0(\L) \otimes \str_{\H} \to \E$$ is surjective and induces a morphism $$f: \P(\E) \to \P^r.$$  We can think of the points of $\P(\E)$ as pairs $(\xi, H^0(\L\otimes \str_{\xi}) \surj Q)$, where $Q$ is a one-dimensional quotient, and $\xi$ is a point of $\H$.  Thus,
\begin{equation}\label{fdef}f(\xi, H^0(\L\otimes \str_{\xi}) \surj Q) = (H^0(\L) \surj Q) \in \P^r.\end{equation} Notice that the image of $f$ is $\sct$, since the surjections in the image are precisely those which factor through $H^0(\L\otimes \str_{\xi})$ for some $\xi \in \H$.  Let $$t: \P(\E) \surj \sct$$ be $f$ with its target restricted.

The following lemma is adapted from \cite{B} in the case of curves and \cite{V} for higher dimensions.
\begin{lem}\label{iso}
Suppose $\L$ is 3-very ample. Then $t: \P(\E) \to \sct$ is an isomorphism away from $t \iv(X)$.  In particular, $t$ is a resolution of singularities.
\end{lem}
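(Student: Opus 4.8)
The plan is to analyze the fibers of $f: \P(\E) \to \P^r$ and show that away from the preimage of $X$, the map $t$ is bijective with surjective differential, hence an isomorphism onto its image (using that $\sct$ is a variety over a field of characteristic zero and, once we know $t$ is birational and quasi-finite onto a normal... no — rather, we show $t$ restricted to the complement of $t^{-1}(X)$ is a closed immersion by checking injectivity on points and injectivity on tangent vectors, together with properness).

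First I would describe the points of $\P(\E)$ lying over a point $p \in \sct \setminus X$ via the explicit formula \eqref{fdef}: such a point $p$ corresponds to a surjection $H^0(\L) \surj Q$ with one-dimensional $Q$, and a preimage is the data of a length-two scheme $\xi \in \H$ such that this surjection factors through $H^0(\L \otimes \str_\xi)$, i.e. such that $p$ lies on the secant line (or tangent line) spanned by $\xi$. So I need: for $p \notin X$, there is a \emph{unique} such $\xi$. This is exactly the statement that a point off $X$ lies on a unique secant/tangent line. Suppose $p$ lay on two distinct lines each spanned by a length-two subscheme $\xi_1, \xi_2$; then $\xi_1 \cup \xi_2$ spans a plane containing $p$ and has length at most $4$, and in fact (since $p$ is on both lines and the lines meet only at $p$, which is not in $X$) the union $\xi_1 \cup \xi_2$ has length $4$ and spans a $\P^2$ — contradicting $3$-very ampleness, which forces every length-$4$ subscheme to span a $\P^3$. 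One has to handle the degenerate configurations (the two schemes sharing a point, or a tangent line situation) carefully, but in each case the relevant subscheme has length $\le 4$ and fails to impose independent conditions, contradicting $3$-very ampleness. Hence $t$ is injective over $\sct \setminus X$, and by symmetry of the argument the fiber is a reduced point.

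Next, for the differential: at a point $z = (\xi, Q) \in \P(\E)$ with $\xi$ disjoint from the locus where things degenerate, I would show $df_z$ is injective by again using $3$-very ampleness to rule out the existence of a tangent vector to $\P(\E)$ killed by $df$. Concretely, a tangent vector in the kernel would produce a first-order family of length-$\le 2$ subschemes all of whose spans pass through the fixed point $p$ with the fixed quotient; infinitesimally this again produces a subscheme of length $\le 4$ (the "second-order neighborhood" of $\xi$ in the relevant direction, or $\xi$ together with the tangent data) imposing dependent conditions on $\L$, contradicting $3$-very ampleness. Combined with injectivity on points and properness of $t$ (it is projective, being a morphism of projective varieties), we conclude by Zariski's main theorem / the fact that a proper injective immersion onto its image with injective differential, in characteristic zero, is a closed immersion: $t$ restricts to an isomorphism $\P(\E) \setminus t^{-1}(X) \xrightarrow{\sim} \sct \setminus X$. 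Finally, since $\P(\E)$ is smooth (it is a $\P^1$-bundle over the smooth variety $\H$) and $t$ is projective and birational, $t$ is a resolution of singularities of $\sct$.

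The main obstacle I expect is the careful bookkeeping with the degenerate length-two and length-four subschemes — in particular translating "the point $p$ lies on the secant line through $\xi$" into the scheme-theoretic statement about $\xi$ (or $\xi_1 \cup \xi_2$) failing to impose independent conditions, and making sure the union really has length $4$ and spans only a plane rather than, say, collapsing to a shorter scheme where $3$-very ampleness gives nothing. The tangent-space computation is essentially the infinitesimal shadow of the same phenomenon and will require identifying $T_z\P(\E)$ and the map $df_z$ explicitly in terms of $H^0(\L)$, $H^0(\L \otimes \str_\xi)$, and the normal directions in $\H$; I would set this up using the description of $\E = \sigma_* q^*\L$ and the Euler sequence on $\P(\E)$, reducing everything to a statement that certain evaluation maps $H^0(\L) \to H^0(\L \otimes \str_{\xi'})$ are surjective for length-$\le 4$ subschemes $\xi'$, which is precisely $3$-very ampleness.
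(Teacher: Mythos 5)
Your argument for bijectivity away from $t^{-1}(X)$ is essentially the paper's: a point $p \notin X$ on two distinct secant lines forces a length-$4$ subscheme $\xi_1 \cup \xi_2$ to span only a plane (and a coincident pair of secant lines forces a length-$\ge 3$ subscheme to span a line), contradicting $3$-very ampleness; you are in fact more careful than the paper about non-reduced and overlapping configurations. Where you diverge is the second half. The paper does \emph{not} prove that $t$ is an immersion away from $t^{-1}(X)$; it cites Lemma 1.4 of Bertram \cite{B} for curves and Theorem 3.9 of Vermeire \cite{V} in higher dimensions (the latter identifying $\P(\E)$ with the blow-up of $\sct$ along $X$). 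Your plan is to prove injectivity of $df_z$ directly by interpreting a kernel vector as a first-order deformation of $\xi$ whose span still passes through $p$ with the fixed quotient, and converting that into a length-$\le 4$ scheme imposing dependent conditions. That is the right idea --- it is essentially what Bertram's lemma does --- and your concluding step (properness plus injectivity on points and on tangent vectors implies closed immersion) is sound. But be aware that this infinitesimal step is the genuinely technical part of the lemma and your proposal only gestures at it: you would need to actually identify $T_z\P(\E)$ via the Euler sequence and the deformation theory of $\H$, and show that the relevant evaluation map on a specific length-$\le 4$ infinitesimal thickening of $\xi$ is the obstruction. If you carry that out you will have reproved the cited results of Bertram and Vermeire, which is more than the paper itself undertakes; if you do not, you should cite them as the paper does, since as written that step is a plan rather than a proof.
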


\begin{proof}
For clarity, we first show that $t$ is a bijection away from $t \iv(X)$, which follows nearly immediately from the 3-very ampleness of $\L$:

  Given a length two 0-dimensional subscheme $\xi$, points of the form $(\xi, H^0(\L\otimes \str_{\xi}) \surj Q) \in \P(\E)$ map to the secant line to $\xi$.  Since $\L$ is 3-very ample, no two distinct length two subschemes will correspond to the same secant line.  Thus, the only way for $t$ not to be a bijection away from $t \iv(X)$ would be for two secant lines of $X$ to intersect away from $X$.  This would cause four points of $X$ to lie on a plane in $\P^r$, which contradicts the 3-very ampleness of $\L$.  
  
In order to show that $t$ is actually an isomorphism away from $t \iv(X)$, we need to check that it is an immersion.  This follows in the curve case from Lemma 1.4 of \cite{B}, and in the higher dimensional case from Theorem 3.9 of \cite {V}.  In the former, Bertram proves that it is an immersion directly.  In the latter, Vermeire shows that $\P(\E)$ is isomorphic to the blowup of $\sct$ along $X$, which clearly implies what we need.
\end{proof}

For our purposes, it will be useful to also understand $t \iv(X)$.  Looking at \eqref{fdef}, we see that $$t \iv(X) = f \iv(X) = \{(\xi, H^0(\L\otimes \str_{\xi}) \surj H^0(\L\otimes \str_{x}): x\in \xi\},$$ which is set-theoretically equal to $\Phi$ (defined in (\ref{phi})).  In fact, a lemma of Vermeire implies that it is actually an isomorphism:

\begin{lem}[\cite{V}, Lemma 3.8]
The scheme-theoretic inverse image $t\iv (X)$ is isomorphic to $bl_{\Delta}(X \times X)$.
\end{lem}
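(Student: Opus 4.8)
My plan is to construct a natural morphism $g\colon\Phi\to\PE$, where $\Phi\cong bl_\Delta(X\x X)$ is the incidence variety of \eqref{phi}, and then to show that $g$ is a closed immersion whose image is exactly $t\iv(X)$. Recall that $\Phi$ carries the maps $q\colon\Phi\to X$ and $\sigma\colon\Phi\to\H$ from the excerpt, with $\sigma$ finite and flat of degree $2$. Since each fibre of $\sigma$ is a length-two subscheme $\xi\subset X$ on which $q^*\L$ restricts to $\L\otimes\str_\xi$ --- globally generated, as $\xi$ is zero-dimensional --- the counit of adjunction $\sigma^*\E=\sigma^*\sigma_*q^*\L\to q^*\L$ is a surjection onto a line bundle; by the universal property of $\PE$ it therefore defines a morphism $g\colon\Phi\to\PE$ over $\H$ with $g^*\str_{\PE}(1)=q^*\L$. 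Unwinding the constructions of $\E$ and of $f$, the morphism $f\circ g$ is the one attached to the line bundle $q^*\L$ together with the linear series obtained by pulling back sections along $q$; hence $f\circ g=\iota\circ q$, where $\iota\colon X\inj\P^r$ is the given embedding. In particular $f\circ g$ factors scheme-theoretically through $X$, so $g$ factors through $t\iv(X)$, and on points it realizes the bijection $(x,\xi)\mapsto(\xi,\,H^0(\L\otimes\str_\xi)\surj H^0(\L\otimes\str_x))$ onto $t\iv(X)$ identified in the excerpt.

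Next I would show that $g$ is a closed immersion. Being proper (a morphism of $\H$-schemes out of the proper $\H$-scheme $\Phi$) with finite fibres, $g$ is finite, so it is enough to check that $\str_{\PE}\to g_*\str_\Phi$ is surjective; since $\Phi$ is flat over $\H$, this may be verified after restricting to each fibre $\P(H^0(\L\otimes\str_\xi))\cong\P^1$ of $\PE\to\H$, where it amounts to the statement that $\xi\to\P(H^0(\L\otimes\str_\xi))$ is a closed immersion. That holds because $\L\otimes\str_\xi$ is very ample on the zero-dimensional scheme $\xi$ and $H^0(\L\otimes\str_\xi)$ is two-dimensional. Consequently $g(\Phi)$ is a closed subscheme of $\PE$ isomorphic to the smooth irreducible variety $\Phi$, hence reduced and irreducible, and $g(\Phi)\subseteq t\iv(X)$ with the same underlying set.

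It remains to see that $t\iv(X)$ is reduced. By Lemma~\ref{iso} and the results cited in its proof, $\PE$ is the blow-up of $\sct$ along $X$, so $t\iv(X)$ is its exceptional divisor, an effective Cartier divisor on the smooth variety $\PE$; such a divisor is Cohen--Macaulay, hence reduced as soon as it is generically reduced. Over a dense open subset of $\H$ the fibre of $\PE\to\H$ is a $\P^1$ mapped isomorphically by $f$ onto the honest secant line $\ell$ through two distinct points $x_1\neq x_2$ of $X$, and there $t\iv(X)$ cuts out $\ell\cap X$; by the $2$-very ampleness implied by our hypothesis this scheme has length exactly $2$ and hence equals the reduced scheme $\{x_1,x_2\}$. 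So $t\iv(X)$ is generically reduced. Since $g(\Phi)$ and $t\iv(X)$ are then both the reduced induced structure on their common irreducible support, they coincide, and $g$ induces the desired isomorphism $bl_\Delta(X\x X)\xrightarrow{\sim}t\iv(X)$ --- compatibly, moreover, with $q$ and with $t|_{t\iv(X)}$.

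The step I expect to be the main obstacle is proving that $g$ is a closed immersion: the fibrewise check has to be combined with flatness of $\Phi$ over $\H$ (alternatively one can appeal to a relative-very-ampleness criterion for $q^*\L$ relative to $\sigma$), and one should take care that the reducedness of $t\iv(X)$ genuinely relies both on the positivity of $\L$ and on the identification of $\PE$ with the blow-up of $\sct$ along $X$.
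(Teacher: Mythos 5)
The paper does not actually prove this lemma: it is quoted verbatim from Vermeire (\cite{V}, Lemma 3.8), and the surrounding text only verifies the set-theoretic identification of $t\iv(X)$ with $\Phi$. So your argument is not ``a different route from the paper's'' --- it is a proof where the paper supplies none, and its overall architecture is sound: build $g\colon\Phi\to\PE$ from the surjective counit $\sigma^*\sigma_*q^*\L\to q^*\L$, check $f\circ g=\iota\circ q$ so that $g$ factors through the scheme-theoretic preimage, show $g$ is a finite closed immersion by a fibrewise check over $\H$, and then upgrade the set-theoretic equality $g(\Phi)=t\iv(X)$ to a scheme-theoretic one by proving $t\iv(X)$ is reduced (Cartier, hence Cohen--Macaulay, hence $S_1$, plus generic reducedness from the fact that a general secant line meets $X$ in a length-two reduced scheme by $2$-very ampleness).

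Two points deserve more care. First, the reducedness step leans on the identification of $\PE$ with $bl_X\sct$ (\cite{V}, Theorem 3.9, quoted in the proof of Lemma~\ref{iso}) to know that $t\iv(X)$ is an effective Cartier divisor. Within this paper that identification is already on the table, so the logic is internally consistent; but as a free-standing replacement for Vermeire's Lemma 3.8 it is potentially circular, since in \cite{V} the blow-up theorem comes after, and rests on, the lemma you are proving. If you want the argument to stand alone you need an independent reason that $\I_X\cdot\str_{\PE}$ is invertible (a codimension-one subscheme of a smooth variety need not be Cartier), e.g.\ the local computation Bertram and Vermeire actually carry out. Second, the two fibrewise-to-global inferences are correct but glossed: surjectivity of $\str_{\PE}\to g_*\str_\Phi$ follows from the fibrewise statement via base change for the affine morphism $g$ together with Nakayama, and the passage from ``the fibre of $t\iv(X)$ over every $\xi$ in a dense open subset of $\H$ is reduced of length $2$'' to ``$t\iv(X)$ is generically reduced'' needs either a semicontinuity argument for $\pi_*\str_{t\iv(X)}$ or, more cleanly, the observation that the Cartier divisor $t\iv(X)$ equals $m\cdot\Phi$ as a cycle and $m$ is computed by intersecting with a general fibre $\P^1$ of $\pi$. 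Both are routine, but they are exactly the places a referee would poke, so write them out.
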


From now on, we will refer to $t \iv (X)$ as simply $\Phi$. Notice that $t\big|_{\Phi} = q$, and for $x \in X$, the fiber is $$F_x := t\iv (x) = \{\xi : x\in \xi\} \cong bl_x(X),$$ which is simply $X$ when $X$ is a curve.\footnote{All of the arguments for the remainder of the paper go through in the case of curves by replacing $E_x$ with $x$. From now on, this will be assumed.}

Let $$\pi : \P(\E) \to \H$$ be the projection map.  Notice that $\pi \big|_{\Phi} = \sigma$.  Furthermore, $\pi \big|_{F_x}$ is an isomorphism, as $F_x$ is a section over $\pi(F_x)$.  When the context is clear, we will refer to $\pi(F_x)$, the points of $\H$ whose  corresponding subschemes contain $x$, as simply $F_x$.

To summarize, we have the following two commutative diagrams, which we will keep in mind for the remainder of the paper:

\begin{equation}\label{impdiags}
\xymatrix@=20pt{ F_x \arinj[r] \arsurj[d] & \Phi \arinj[r] \arsurj[d]^q & \PE \arsurj[d]^t \arsurj[dr]^f & \\ \{x\} \arinj[r] & X \arinj[r] & \sct \arinj[r] & \P^r}
\textrm{and    \hspace{.5cm} }
\xymatrix@=20pt{ F_x \arinj[r] \ar[d]^{\cong} & \Phi \arinj[r] \arsurj[d]^{\sigma} & \PE \arsurj[dl] \\ F_x \arinj[r] & \H &}.
\end{equation}

Since $t$ is a resolution of singularities, our strategy for showing $\sct$ is normal is to show $t_*\str_{\P(\E)} = \str_{\sct}$ by exploiting the geometry of $\Phi$ and $F_x$.

\section{Proof of the main theorem}

In this section, we give the proof of Theorem D, continuing with the same notation as in the previous sections.  We begin by observing that the normality of the secant variety is controlled by the geometry of the conormal bundle to $F_x$.

\begin{lem}\label{alpha}
Let $\L$ be a 3-very ample line bundle on $X$.  Let $x \in X$, and let $\alpha_{x,k}$ be the natural map
$$\alpha_{x,k} : \emph{\Sym}^k(T_x^*\P^r) \to H^0(\emph{\Sym}^k N^*_{F_x/\P(\E)}).$$  If $\alpha_{x,k}$ is surjective for all $k>0$ and all $x \in X$, then $\sct$ is a normal variety.
\end{lem}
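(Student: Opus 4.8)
The plan is to prove that the natural inclusion $\str_{\sct}\inj t_*\str_{\P(\E)}$ is an isomorphism, which is equivalent to normality. Indeed, since $\P(\E)$ is smooth (Lemma~\ref{iso}) and $t$ is proper and birational, $t$ factors through the normalization $\nu\colon\widetilde\Sigma\to\sct$ by a proper birational morphism $\widetilde t\colon\P(\E)\to\widetilde\Sigma$; as $\widetilde\Sigma$ is normal and $\P(\E)$ is integral, $\widetilde t_*\str_{\P(\E)}=\str_{\widetilde\Sigma}$ (a consequence of Zariski's Main Theorem), so $t_*\str_{\P(\E)}=\nu_*\str_{\widetilde\Sigma}$. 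Since $\nu$ is finite, $\sct$ is normal if and only if $\nu$ is an isomorphism, if and only if the always-injective map $\str_{\sct}\to t_*\str_{\P(\E)}$ is surjective.

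Surjectivity is local on $\sct$, and by Lemma~\ref{iso} it is automatic over $\sct\setminus X$; so I would check it on the stalk at a point $x\in X$. Because $t_*\str_{\P(\E)}$ is a finite $\str_{\sct}$-module (being $\nu_*\str_{\widetilde\Sigma}$), it suffices to prove surjectivity after completing at $x$. Here the theorem on formal functions gives $(t_*\str_{\P(\E)})^\wedge_x\cong\varprojlim_n H^0\!\big(\P(\E),\str_{\P(\E)}/\mathcal J^n\big)$, where $\mathcal J=\mathfrak m_x\cdot\str_{\P(\E)}$ is the ideal sheaf of the scheme-theoretic fibre $t\iv(x)$; by the description of this fibre recalled in Section~1, $t\iv(x)=F_x$, and hence $\mathcal J=\mathcal I_{F_x}$, the ideal sheaf of $F_x$ in $\P(\E)$.

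Both $\widehat{\str}_{\sct,x}$ and $\varprojlim_n H^0(\P(\E),\str_{\P(\E)}/\mathcal I_{F_x}^n)$ carry natural, exhaustive, separated filtrations (the $\mathfrak m_x$-adic one, respectively the one induced by the powers of $\mathcal I_{F_x}$), with respect to which they are complete and the completed map is filtered; so I would reduce to showing that the induced map on associated gradeds is surjective in each degree. The degree-$n$ piece of the source is $\mathfrak m_x^n/\mathfrak m_x^{n+1}$, which is a quotient of $\Sym^n(T_x^*\P^r)$ since $x\in X\subset\sct\subset\P^r$. The degree-$n$ piece of the target embeds into $H^0(\mathcal I_{F_x}^n/\mathcal I_{F_x}^{n+1})=H^0(\Sym^n N^*_{F_x/\P(\E)})$ --- using that $F_x\inj\P(\E)$ is a regular embedding of smooth varieties --- and one checks that the composite $\Sym^n(T_x^*\P^r)\to\mathrm{gr}^n\inj H^0(\Sym^n N^*_{F_x/\P(\E)})$ is exactly $\alpha_{x,n}$. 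Thus surjectivity of $\alpha_{x,n}$ forces the intermediate space $\mathrm{gr}^n$ to be all of $H^0(\Sym^n N^*_{F_x/\P(\E)})$ and the map $\Sym^n(T_x^*\P^r)\to\mathrm{gr}^n$, hence a fortiori $\mathfrak m_x^n/\mathfrak m_x^{n+1}\to\mathrm{gr}^n$, to be surjective. This yields surjectivity on associated gradeds, hence on completions, hence --- by faithful flatness of completion over the Noetherian local ring $\str_{\sct,x}$ --- surjectivity of $\str_{\sct}\to t_*\str_{\P(\E)}$ at $x$.

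I expect the bookkeeping above to be essentially routine; the real content is isolated in two places. First, that the scheme-theoretic fibre $t\iv(x)$ is $F_x\cong bl_x X$, so that $\mathcal J$ is the ideal of a smooth subvariety and its graded pieces are symmetric powers of $N^*_{F_x/\P(\E)}$ --- this is where $3$-very ampleness of $\L$ and the structural results of Bertram and Vermeire enter. Second, and this is the crux that the lemma simply hypothesizes, the surjectivity of $\alpha_{x,n}$ for all $n$; translating that into a workable positivity condition on $\L$ (via the conormal bundle of $F_x$ in $\P(\E)$) is the work that remains, and is presumably what the corollaries in Section~3 carry out.
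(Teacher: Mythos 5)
Your proof is correct and follows essentially the same route as the paper: both reduce, via the theorem on formal functions, to comparing the graded pieces $\Sym^k(T_x^*\P^r)$ and $H^0(\Sym^k N^*_{F_x/\P(\E)})$, which is exactly where $\alpha_{x,k}$ enters. You package the passage from surjectivity on associated gradeds to surjectivity of the completed map as an abstract complete-filtered-module lemma, while the paper unwinds the same step concretely with the snake lemma, the five lemma, and the Mittag--Leffler criterion; the content is identical.
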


\begin{proof}
We have the following natural maps of sheaves:

$$\xymatrix@=10pt{\str_{\P^r} \ar[d] \ar@{->>}[r]& \str_{\sct} \ar@{_{(}->}[dl]\\ t_*\str_{\P(\E)} & }.$$
As pointed out at the end of the last section, if $t_*\str_{\P(\E)} = \str_{\sct}$, then $\sct$ is normal.  So we need to show $\str_{\sct} \to t_*\str_{\P(\E)}$ is surjective.  Thus, by the above diagram, it suffices to show $\str_{\P^r} \to t_*\str_{\P(\E)}$ is surjective.

The map $\str_{\P^r} \to t_*\str_{\P(\E)}$ is surjective if and only if the completion of the map is surjective at every point $x\in \sct$.  However, we only need to check this for $x\in X$, since $\P(\E)$ is smooth, and $t$ is an isomorphism away from $t\iv (X)$ by Lemma \ref{iso}.

Let $$\I_x = \textrm{the ideal sheaf of } F_x \subseteq \P(\E)$$ and $$m_x = \textrm{the ideal sheaf of } x \in \P^r.$$  Then by the theorem of formal functions \cite{H}, we need to show that the map 
$$\Psi_x : \lim_{\longleftarrow} \left(\str_{\P^r}/m_x^k\right) \to \lim_{\longleftarrow} \left(H^0\left(\str_{\P(\E)}/\I_x^k\right)\right)$$ is surjective for each $x \in X$.

Consider the following diagram:
\begin{equation}\label{diag} \xymatrix@=19pt{ 0 \ar[r] & m_x^k/ m_x^{k+1} \ar[r] \ar[d]^{\alpha_{x,k}} &\str_{\P^r} / m_x^{k+1} \ar[r]^a \ar[d]^{\Psi_{x,k+1}} & \str_{\P^r} / m_x^{k} \ar[r] \ar[d]^{\Psi_{x,k}}& 0& \\ 0 \ar[r] & H^0 \left( \I_x^k / \I_x^{k+1} \right) \ar[r] & H^0 \left( \str_{\P(\E)} / \I_x^{k+1} \right) \ar[r]^b& H^0 \left( \str_{\P(\E)} / \I_x^{k} \right) \ar[r]^c & H^1 \left( \I_x^k / \I_x^{k+1} \right) \ar[r] & \cdots}.\end{equation}
Note that we have canonical isomorphisms 
$$m_x^k/ m_x^{k+1} \cong \Sym^k(T_x^*\P^r)$$ and $$ \I_x^k / \I_x^{k+1} \cong \Sym^k N^*_{F_x/\P(\E)}.$$ 

We claim that it suffices to show all the vertical maps are surjective for all $k$: Assume the vertical maps are surjective.  Then the snake lemma says that $$\ker \Psi_{x, k+1} \to \ker \Psi_{x, k}$$ is surjective for all $k$.  In particular, the inverse system $(\ker \Psi_{x, k})$ satisfies the Mittag-Leffler condition (see II.9 of \cite{H}).  Thus, by Prop II.9.1(b) of \cite{H}, $\Psi_x$ is surjective.  Thus, we are reduced to showing that the vertical arrows are surjections.

We claim that if the left vertical arrow $\alpha_{x,k}$ is surjective for all $k$, then $\Psi_{x,k}$ is surjective for all $k$.  We show this by induction.  

The base case is $k=1$:  Consider the map $$ \Psi_{x,1} : \str_{\P^r} / m_x \to H^0 \left( \str_{\P(\E)} / \I_x \right) = H^0 (\str_{F_x}) .$$
Since $F_x$ is reduced and irreducible, $h^0 (\str_{F_x}) = 1$, and since $\Psi_{x,1}$ is certainly nonzero, it must be surjective.

Now assume $\Psi_{x,k}$ is surjective.  Then, looking back at (\ref{diag}), the composition $\Psi_{x, k} \circ a$ is surjective.  Thus, by commutativity, $b \circ \Psi_{x, k+1}$ is surjective.  Therefore, $c$ must be the zero map, so that the bottom sequence of maps between global sections is actually short exact.  Thus, by the five lemma, the center vertical map $\Psi_{x, k+1}$ is surjective. Thus, only the left vertical map $\alpha_{x, k}$ needs to be surjective in order to guarantee the normality of $\sct,$ as desired. 
\end{proof}

For the remainder of the section, we will focus on finding the conditions under which $\alpha_{x,k}$ is surjective.  The next two lemmas will help us better understand the target space.  

Since $F_x$ is a section over its image $\pi(F_x)$, we have the following short exact sequence:
\begin{equation}\label{seq} 0 \to \T \to \Ud \to N_{F_x /\H} \to 0. \end{equation}
We first calculate the left term of this sequence.
\begin{lem}\label{2isos}
Suppose $\L$ is 3-very ample. Let $b_x$ be the blow-up map of $X$ at $x$, and $E_x$ the corresponding exceptional divisor. Then 
\item $$\T \cong \det\E^* \big|_{F_x} \cong b_x^* \L^*(E_x).$$
\end{lem}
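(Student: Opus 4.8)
The plan is to read off the line bundle $\T$ from the relative Euler sequence of the $\P^1$-bundle $\pi\colon\PE\to\H$, and then to compute $\det\E$ along $F_x$ by examining the universal length-two family over $F_x$. Since (as in Section~1) $\PE$ parametrizes one-dimensional quotients of $\E$, the relative Euler sequence has the form $0\to\pi^*(\det\E)\otimes\str_{\PE}(-1)\to\pi^*\E\to\str_{\PE}(1)\to 0$, the kernel being a line bundle because $\E$ has rank two; taking determinants of this sequence of rank-two sheaves gives $T_{\PE/\H}\cong\pi^*(\det\E)^{-1}\otimes\str_{\PE}(2)$. Restricting to $F_x$ and using that $\pi|_{F_x}\colon F_x\xrightarrow{\ \sim\ }\pi(F_x)$ is an isomorphism, I am reduced to computing $\str_{\PE}(1)|_{F_x}$ and $\det(\E|_{F_x})$.

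The first of these is immediate. By the description in Section~1, $F_x$ is the section of $\PE$ over $\pi(F_x)$ cut out by the tautological quotient ``evaluation at $x$'', namely $\E|_{\pi(F_x)}\surj H^0(\L\otimes\str_x)\otimes\str_{F_x}$; since $H^0(\L\otimes\str_x)$ is a fixed one-dimensional vector space, this quotient line bundle is trivial, and for any section cut out by a rank-one quotient $\E\surj M$ one has $\str_{\PE}(1)$ restricting to $M$. Hence $\str_{\PE}(1)|_{F_x}\cong\str_{F_x}$, and already $\T\cong\det(\E|_{F_x})^{-1}=\det\E^*|_{F_x}$, which is the first asserted isomorphism. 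Moreover, restricting the Euler sequence to $F_x$ and using this triviality, $\det(\E|_{F_x})$ is identified with the kernel $K$ of the evaluation map $\E|_{F_x}\surj H^0(\L\otimes\str_x)\otimes\str_{F_x}$ --- equivalently, the fibre of $K$ at $\xi$ is the space of sections of $\L\otimes\str_\xi$ vanishing at $x$.

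It then remains to prove $K\cong b_x^*\L(-E_x)$. Over the locus of reduced subschemes $\xi=\{x,y\}$ this kernel is $H^0(\L\otimes\str_y)$, so $K$ agrees canonically with $b_x^*\L$ on $F_x\setminus E_x$. Over a point $[v]\in E_x$, which corresponds to the length-two subscheme $\xi_v$ with tangent direction $v$ at $x$, the kernel is $H^0(\L\otimes\str_x)\otimes\mathfrak{m}_{\xi_v}$, where the maximal ideal $\mathfrak{m}_{\xi_v}=\mathfrak{m}_x/(\mathfrak{m}_x^2+\mathrm{Ann}(v))$ is, once $E_x$ is identified with the projectivized normal cone of $x$, canonically the fibre of $N^*_{E_x/bl_xX}=\str(-E_x)|_{E_x}$; so the fibre of $K$ at $[v]$ matches that of $b_x^*\L(-E_x)$. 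Since $K$ and $b_x^*\L(-E_x)$ are line bundles on $bl_xX$ agreeing off the irreducible divisor $E_x$ and having matching fibres along $E_x$, they are isomorphic, and therefore $\det\E^*|_{F_x}=\det(\E|_{F_x})^{-1}\cong b_x^*\L^*(E_x)$. In the curve case $F_x=X$, $b_x=\id$ and $E_x=x$, and this last step reduces to the elementary fact that the subsheaf $\L(-x)\subseteq\L$ has fibre $H^0(\L\otimes\str_x)\otimes\mathfrak{m}_x/\mathfrak{m}_x^2$ at $x$.

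The delicate point --- and where the real content lies --- is precisely the twist by $E_x$ in the last step: away from $E_x$ the kernel $K$ is visibly $b_x^*\L$, but along the exceptional divisor it drops, and this is easy to miss, since in an affine chart of $F_x$ the sheaf $K$ is free and the twist is invisible. Making it rigorous amounts to controlling how the universal family over $F_x$ --- the constant section through $x$ together with the residual subscheme --- is glued along $E_x$, equivalently to the fibrewise computation above combined with the elementary lemma on line bundles agreeing off an irreducible divisor.
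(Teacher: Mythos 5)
Your argument is correct, and it reaches both isomorphisms by a genuinely different route from the paper's in the key step. For the first isomorphism you both start from the relative Euler/tautological sequence and reduce to showing $\str_{\PE}(1)\big|_{F_x}\cong\str_{F_x}$; the paper gets this by observing $f^*\str_{\P^r}(1)\cong\str_{\PE}(1)$ and that $f$ contracts $F_x$ to the point $x$, whereas you get it from the fact that $F_x$ is the section cut out by the trivial rank-one quotient ``evaluation at $x$'' --- both are fine. The real divergence is in computing $\det\E\big|_{F_x}$. The paper uses flat base change along the finite double cover $\sigma$ to write $\E\big|_{F_x}\cong\sigma_*i^*q^*\L$, exhibits a natural injection $\E\big|_{F_x}\to\str_{F_x}\oplus b_x^*\L$ of rank-two bundles dropping rank exactly along $E_x$, and takes determinants, so the twist by $-E_x$ appears in one stroke as the degeneracy divisor. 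You instead split off the trivial quotient to identify $\det\E\big|_{F_x}$ with the kernel line bundle $K$ of evaluation at $x$, match $K$ with $b_x^*\L$ off $E_x$, and compute the jump along $E_x$ fibrewise. Your route makes the geometric meaning of the $-E_x$ twist more transparent, but it leans on two points you should make explicit: (i) the gluing lemma requires that the kernel of $\mathrm{Pic}(bl_xX)\to\mathrm{Pic}(bl_xX\setminus E_x)$ be generated by $\str(E_x)$ and that $\str(nE_x)\big|_{E_x}\cong\str_{\P^{n-1}}(-n)$ distinguishes the integer $n$ (which degenerates for curves, as you note and handle separately); and (ii) the fibrewise identification along $E_x$ must be upgraded to an isomorphism of line bundles on $E_x$, i.e., you need that $K\big|_{E_x}$ is canonically $\L_x\otimes\str_{\P(T_xX)}(1)$ as the universal quotient of $T_x^*X\otimes\str_{E_x}$, not merely that the fibres have matching descriptions point by point. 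Both points are true and you flag the second as the delicate one, so this is a matter of writing rather than a gap; the paper's push-pull argument simply avoids them.
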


\begin{proof}

Consider the relative Euler sequence
\begin{equation} 0 \to \str_{\PE} \to \pi^* \E^* \otimes \str_{\PE}(1) \to T_{\PE / \H} \to 0. \end{equation}
Since $T_{\PE / \H}$ is a line bundle, taking determinants yields 
$$T_{\PE / \H} \cong \det (\pi^* \E^*) \otimes \str_{\PE}(2) \cong (\pi^* \det \E )^* \otimes \str_{\PE}(2).$$
So
$$\T \cong (\pi^* \det \E )^*\big|_{F_x} \otimes \str_{\PE}(2)\big|_{F_x} \cong \det \E^*\big|_{F_x} \otimes \str_{\PE}(2)\big|_{F_x}.$$

To calculate $\str_{\PE}(2)\big|_{F_x}$, consider the left diagram in (\ref{impdiags}). First note that by construction of the map $f: \PE \to \P^r$ via maps of vector bundles, it follows that the pullback of the tautological bundle is also the tautological bundle.  That is, $$f^*\str_{\P^r}(1) \cong \str_{\PE} (1).$$ Thus, 
$\str_{\PE}(1)\big|_{F_x}$ is isomorphic to the pullback of $\str_{\P^r}(1)\big|_x \cong \str_x$ to $F_x$. So $$\str_{\PE} (1) \big|_{F_x} \cong \str_{F_x}.$$ Thus, $$\T \cong \det\E^* \big|_{F_x},$$ which is the first isomorphism in the lemma.  The next step is to understand the restriction of $\E$ to $F_x$.

Consider the diagram
\begin{equation}
\xymatrix@=22pt {\sigma \iv (F_x) \ar@{^{(}->}[r]^i \ar[d]_{\sigma} & \Phi \ar[d]^{\sigma} \\ F_x \ar@{^{(}->}[r]_j & \H}.
\end{equation}
We have temporarily named the inclusion maps so that we can easily refer to them.  Note that $\sigma \iv (F_x)$ is two copies of $F_x$ intersecting along $E_x$. Since the above is a Cartesian square and $\sigma$ is flat and finite, base change yields
$$\E\big|_{F_x} = j^* \sigma_* q^* \L \cong \sigma_* i^* q^* \L.$$  If we think of $\Phi$ as $bl_\Delta (X \times X)$, then $q$ is the blowup morphism followed by projection to the first factor.  Thus, $i^* q^* \L$ is  isomorphic to  $\str_{F_x}$ when restricted to one reducible component, and $b_x^* \L$ when restricted to the other.  Thus, pushing forward, we have a natural map
\begin{equation}\label{natinj} \E\big|_{F_x} \cong \sigma_* i^* q^* \L \to \str_{F_x} \oplus b_x^* \L,\end{equation}
which is an injection that drops rank along $E_x$.

As an aside, it is useful to recall that the fiber of $\E\big|_{F_x}$ over a point $\xi \in F_x$ is $H^0(X, L \otimes \str_{\xi})$, where $\xi$ is some length two subscheme of $X$ which contains $x$.  So over generic $\xi$, the map (\ref{natinj}) on fibers is the sum of restrictions $$H^0(X, L \otimes \str_{\xi}) \to H^0(X, L \otimes \str_x) \oplus H^0(X, L \otimes \str_y),$$ where $\{x,y\} = \Supp (\xi)$.  

Since the vector bundles in (\ref{natinj}) have the same rank, taking determinants yields $$\det \E \big|_{F_x} \cong b_x^* \L(-E_x).$$ Thus, $$\T \cong \det\E^* \big|_{F_x} \cong b_x^* \L^*(E_x).$$
\end{proof}

We can now rewrite (\ref{seq}) as $$0 \to b_x^* \L^*(E_x) \to \Ud \to N_{F_x /\H} \to 0.$$  In this next key lemma, we calculate the middle term of this sequence.

\begin{lem}\label{splitlem}
Suppose $\L$ is 3-very ample.  Then for all $x \in X$, $$\U \cong \str_{F_x}^{\oplus n} \oplus (b_x^*\L (-2E_x)),$$ where $b_x$ is the blow-up map of $X$ at $x$, and $E_x$ is the corresponding exceptional divisor.
\end{lem}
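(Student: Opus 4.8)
The plan is to compute the conormal bundle $\U = N^*_{F_x/\PE}$ by working with the short exact sequence \eqref{seq}, which after Lemma \ref{2isos} reads
\begin{equation}\label{seqbis} 0 \to N^*_{F_x/\H} \to \U \to b_x^*\L^*(E_x) \to 0, \end{equation}
obtained by dualizing. So the first task is to identify $N^*_{F_x/\H}$, i.e. the conormal bundle of $F_x = \pi(F_x)$ inside the Hilbert scheme $\H$. Here I would use that $F_x \cong bl_x(X)$ sits inside $\H$ as the locus of subschemes containing $x$, and that this is a nice (e.g. smoothly embedded) subvariety of the smooth variety $\H$. The key computation is that $N_{F_x/\H}$ should be identified with a twist of the cotangent bundle of $X$ pulled back to the blowup: intuitively, deforming a length-two scheme $\xi = \{x,y\}$ away from the condition "$x \in \xi$" means moving the point $x$, so the normal directions are the tangent directions of $X$ at $x$, which over the varying second point $y$ organize into $T_xX \otimes \str_{F_x}$ away from $E_x$, and along $E_x$ one must account for the blowup. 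Concretely I expect $N_{F_x/\H}$ (or its dual) to be expressible via $b_x^*\Omega^1_X$ and $\str(E_x)$; working this out carefully — using, say, the description of the tangent space to $\H$ at $\xi$ as $\Hom(\mathcal{I}_\xi, \str_\xi)$ and the diagram \eqref{impdiags} relating $F_x$, $\Phi$, and $\H$ — is the crux of the lemma.

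Once $N^*_{F_x/\H}$ is pinned down, the second step is to show that the sequence \eqref{seqbis} splits, yielding the claimed direct sum $\str_{F_x}^{\oplus n} \oplus b_x^*\L(-2E_x)$. I would compute the relevant $\mathrm{Ext}^1(b_x^*\L^*(E_x), N^*_{F_x/\H})$ — equivalently an $H^1$ of a twist of $\Omega^1_X$-type bundle on $bl_x(X)$ — and argue it vanishes, using $3$-very ampleness of $\L$ to guarantee enough positivity of the relevant twists (note that $3$-very ampleness is exactly what lets us twist down by $2E_x$ and still stay positive). Alternatively, and perhaps more cleanly, I would construct the splitting geometrically: exhibit $\str_{F_x}^{\oplus n}$ as a subbundle of $\U$ coming from the fact that $F_x$ is cut out (in the relevant directions) by the linear equations of the tangent space $T_x\P^r$, using $f^*\str_{\P^r}(1) \cong \str_{\PE}(1)$ and $\str_{\PE}(1)|_{F_x} \cong \str_{F_x}$ from the proof of Lemma \ref{2isos}; the trivial summands would then correspond to the $n = \dim X$ independent linear forms vanishing on the embedded tangent space, and the remaining line-bundle summand is forced by comparing determinants with Lemma \ref{2isos}. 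Matching determinants is a useful consistency check: $\det \U = \det N^*_{F_x/\H} \otimes b_x^*\L^*(E_x)$ should come out to $b_x^*\L(-2E_x)$, which constrains $\det N^*_{F_x/\H}$ to be $b_x^*\L^{\otimes 2}(-3E_x)$, i.e. $N_{F_x/\H}$ has determinant $b_x^*\L^{\otimes -2}(3E_x)$ — a check the $\Omega^1_X$-based formula must satisfy.

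The main obstacle I anticipate is the identification of $N_{F_x/\H}$ and, more delicately, proving that \eqref{seqbis} actually splits rather than merely computing its extension class up to the ambiguity of which twist appears. The trivial-summand part ($\str_{F_x}^{\oplus n}$) is geometrically transparent — these are the "tangent directions of $X$ at $x$" directions, detectable via linear forms on $\P^r$ — but showing the complementary piece is precisely $b_x^*\L(-2E_x)$ and that there is no nontrivial extension gluing it to the trivial part will require either the cohomology vanishing above (where $3$-very ampleness enters) or a careful direct construction of the projection $\U \surj \str_{F_x}^{\oplus n}$. I would lead with the geometric construction and fall back on the $H^1$-vanishing argument if the splitting map is not transparent enough to write down explicitly.
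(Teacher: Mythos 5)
There is a genuine gap: the short exact sequence you choose to split is the wrong one, and it does not split. First, a computational slip: dualizing \eqref{seq} swaps sub and quotient \emph{and} dualizes the outer terms, so the quotient of $\U$ is $\T^* \cong b_x^*\L(-E_x)$, not $b_x^*\L^*(E_x)$; your determinant ``consistency check'' inherits this error (the correct constraint is $\det N_{F_x/\H} \cong \str_{F_x}(E_x)$, so $N_{F_x/\H}$ is an elementary modification of $\str_{F_x}^{\oplus n}$ along $E_x$ rather than anything built from $\Omega^1_X$). More seriously, even after correcting this, the dualized \eqref{seq} \emph{cannot} split if the lemma is true: by Atiyah's Krull--Schmidt theorem the decomposition of $\U$ into indecomposables is unique, and $b_x^*\L(-E_x)$ is isomorphic to neither $\str_{F_x}$ nor $b_x^*\L(-2E_x)$, so it is not a direct summand of $\str_{F_x}^{\oplus n}\oplus b_x^*\L(-2E_x)$. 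Hence the extension class you propose to kill is nonzero, and both your $\mathrm{Ext}^1$-vanishing plan and your geometric-splitting plan, as applied to this sequence, must fail.

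The sequence that actually does the work comes from the other natural filtration, $F_x \subset \Phi \subset \PE$, which your proposal never invokes: $0 \to N_{F_x/\Phi} \to \Ud \to N_{\Phi/\PE}\big|_{F_x} \to 0$. Here the sub is trivial of rank $n$ because $F_x$ is a fiber of the smooth map $q:\Phi\to X$, and the quotient is a line bundle (as $\Phi\subset\PE$ is a divisor), hence equal to $\det\Ud$. The only role of \eqref{seq} is to compute that determinant, via $\det N_{F_x/\H}\cong(\det N_{F_x/\Phi})(E_x)\cong\str_{F_x}(E_x)$ (from the ramification of $\sigma$ along $E_x$), giving $\det\Ud\cong b_x^*\L^*(2E_x)$. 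Dualizing yields $0\to b_x^*\L(-2E_x)\to\U\to\str_{F_x}^{\oplus n}\to 0$, and since the quotient is trivial, splitting reduces to surjectivity of $H^0(\U)\to H^0(N^*_{F_x/\Phi})$, which follows from the surjection $T^*_x\P^r\surj T^*_xX\cong H^0(N^*_{F_x/\Phi})$. This is the rigorous form of your ``linear forms'' heuristic, but note that the trivial factor is a \emph{quotient} of the conormal bundle, not a subbundle, and the $n$ sections come from restricting covectors of $\P^r$ to $T_xX$, not from the $r-n$ linear forms vanishing on the embedded tangent space.
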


\begin{proof}
The map induced by $\sigma$ on normal bundles $N_{F_x/ \Phi} \to N_{F_x / \H} $ is an isomorphism away from the ramification locus, which intersects $F_x$ in $E_x$.  Thus, $$\det N_{F_x/\H} \cong (\det N_{F_x/\Phi})(E_x).$$
Now $q : \Phi \to X$ is a smooth map of which $F_x$ is a fiber, so $N_{F_x/\Phi}$ is isomorphic to the pullback of $N_{x/X}$.  Thus, 
$$N_{F_x/\Phi} \cong  \str_{F_x}^n,$$ which means that $$\det N_{F_x/\H} \cong \str_{F_x}(E_x).$$  Looking back at the short exact sequence before the lemma, this means that $$\det \Ud \cong b_x^* \L^*(2E_x).$$

Now consider the following short exact sequence on normal bundles, again involving $\Ud$:
\begin{equation}\label{splits} 
0 \to N_{F_x/\Phi} \to \Ud \to N_{\Phi / \PE }\big|_{F_x} \to 0.
\end{equation}
We have already established that the left term is the trivial bundle of rank $n$.  Since $\Phi \subset \PE$ has codimension one, $N_{\Phi / \PE }\big|_{F_x}$ must be a line bundle.  Thus, taking determinants, we obtain
$$N_{\Phi / \PE }\big|_{F_x} \cong \det \Ud \cong b_x^* \L^*(2E_x).$$
We take the dual and rewrite (\ref{splits}) as 
$$
0 \to b_x^* \L(-2E_x)  \to \U \to \str_{F_x}^n \to 0.
$$

Our final goal is to show that the above sequence splits.  Since the right term is trivial, this is the same as showing that the map on global sections $$H^0\left(\U \right) \to H^0\left(N^*_{F_x/\Phi}\right)$$ is a surjection.
Consider the commutative diagram
\begin{equation*}
\xymatrix@=20pt { T^*_x \P^r  \arsurj[r] \ar[d]_{\alpha_{x,1}} & T^*_x X \ar[d] \\ H^0\left(\U \right) \ar[r] & H^0\left(N^*_{F_x/\Phi}\right)}.
\end{equation*}
As mentioned earlier, $$N_{F_x/\Phi} \cong T^*_x X \otimes \str_{F_x}.$$
Thus, the right vertical map is an isomorphism, so the bottom horizontal map must be a surjection, as desired.
Therefore, the desired sequence splits, which completes the proof.

\end{proof}

Now we return to showing that $\alpha_{x,k}$ is surjective.  In the case $k=1$, it is actually an isomorphism, which follows from a straight-forward geometric argument.

\begin{lem}\label{alphaone} Suppose $\L$ is 3-very ample.  Then $$\alpha_{x,1}: T^*_x \P^r \to H^0\left(\U \right)$$ is an isomorphism for all $x \in X$. \end{lem}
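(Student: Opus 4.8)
The plan is to prove that $\alpha_{x,1}$ is injective; since source and target turn out to have the same dimension, it is then automatically an isomorphism. For the dimension count: $\dim T^*_x\P^r=h^0(\L)-1$, while Lemma~\ref{splitlem} gives $\U\cong\str_{F_x}^{\oplus n}\oplus b_x^*\L(-2E_x)$, so that, using $h^0(\str_{F_x})=1$ and $h^0\bigl(b_x^*\L(-2E_x)\bigr)=h^0(X,\L\otimes m_x^2)$,
\[
h^0(\U)=n+h^0(X,\L\otimes m_x^2)=n+\bigl(h^0(\L)-(n+1)\bigr)=h^0(\L)-1 ,
\]
the middle equality holding because $3$-very ampleness forces the length-$(n+1)$ subscheme cut out by $m_x^2$ to impose independent conditions on $\L$.

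For the injectivity I would argue geometrically. Combining the definition of $\alpha_{x,1}$ from diagram~\eqref{diag} with the identities $f^*\str_{\P^r}(1)\cong\str_{\PE}(1)$ and $\str_{\PE}(1)\big|_{F_x}\cong\str_{F_x}$ recalled in Section~1, one sees that $\alpha_{x,1}$ is the map carrying (the class of) a hyperplane $H=\{g=0\}$ through $x$ to the class $\overline{f^*g}\in\I_x/\I_x^2=\U$ --- equivalently, to the first-order behaviour of $f^*g$ in the directions normal to $F_x$ --- this being well defined because $f$ contracts $F_x$ to $x$. Fix $g\ne 0$; as $F_x$ is integral and $\U$ locally free, it is enough to exhibit one point of $F_x$ where $\overline{f^*g}$ is nonzero. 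Choose a general $y\in X$, let $\zeta\in F_x$ be the point given by the pair $\{x,y\}$ with quotient ``evaluation at $x$'', and let $\ell=\pi^{-1}(\{x,y\})\cong\P^1$ be the fibre of $\pi:\PE\to\H$ through $\zeta$. From the construction of $\PE$ and $f$ in Section~1, $f$ maps $\ell$ isomorphically onto the secant line $\overline{xy}\subset\P^r$, with $\zeta\mapsto x$; and because $\pi\big|_{F_x}$ is an isomorphism onto its image, the vertical tangent space $T_\zeta\ell$ meets $T_\zeta F_x$ only in $0$, so the tangent direction of $\ell$ at $\zeta$ is normal to $F_x$. Now $f^*g\big|_\ell$ is the linear form $g$ restricted to $\overline{xy}$. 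For general $y$ this line is not contained in $\{g=0\}$: otherwise $g$ would vanish at $y$, hence --- as $y$ varies --- on all of $X$, contradicting that a nonzero linear form cannot vanish on the nondegenerate variety $X$. So $g\big|_{\overline{xy}}$ vanishes at $x$ to order exactly $1$, whence $f^*g$ vanishes to order exactly $1$ at $\zeta$ along $\ell$; since $T_\zeta\ell$ is a normal direction to $F_x$, this means $\overline{f^*g}$ is nonzero at $\zeta$. Hence $\alpha_{x,1}$ is injective, so an isomorphism.

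The one genuinely delicate point is the opening sentence of the second paragraph --- pinning down that $\alpha_{x,1}$ really is ``pull back a hyperplane and record its normal derivative along $F_x$'' --- which requires the definition of $\alpha_{x,1}$ in diagram~\eqref{diag} to be read carefully against the tautological descriptions of $\str_{\PE}(1)$ and of $f$. Once that identification is made, the three ingredients used (a $\pi$-fibre maps isomorphically onto a secant line; it is transverse to $F_x$; a general secant line through $x$ avoids a fixed hyperplane) follow immediately from Section~1 and the nondegeneracy of $X$, and the dimension count is routine given Lemma~\ref{splitlem}.
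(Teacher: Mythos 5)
Your proof is correct and takes essentially the same route as the paper's: you establish injectivity by pairing the pulled-back linear form with the tangent direction of the fibre $\pi^{-1}(x+y)$ lying over a secant line that escapes the given hyperplane (using nondegeneracy of $X$ and the transversality of that fibre to $F_x$), and you finish with the same dimension count via Lemma~\ref{splitlem}. One small correction: the surjectivity of $H^0(\L)\to H^0(\L\otimes\str/m_x^2)$, equivalently $h^0\bigl(b_x^*\L(-2E_x)\bigr)=h^0(\L)-(n+1)$, should be justified by \emph{very} ampleness of the complete embedding (separation of tangent directions at $x$), as the paper does --- the subscheme $V(m_x^2)$ has length $n+1$, so for $n\geq 4$ this does not follow from $3$-very ampleness in the way you assert, though the conclusion is of course still true.
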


\begin{proof}
First we show  $\alpha_{x,1}$ is injective.  Let $w \in T^*_x \P^r$ be a nonzero covector.  Call the kernel hyperplane in the tangent space $H \subset \P^r$.  Since $X \in \P^r$ is non-degenerate, we can pick some $y \in X$ such that $y  \notin H$.  Define $\ell$ to be the secant line through $x$ and $y$.
Now define $$\tilde{\ell} := f\iv (\ell) \subset \PE.$$  Note that $\tilde{\ell}$ consists of all the points in $\PE$ in the fiber over the subscheme $x+y \in \H$.  That is, $$\tilde{\ell} = \pi\iv (x+y).$$ Thus, $\tilde{\ell}$ intersects $F_x \cong bl_x(X)$ at the point corresponding to $y$, i.e. at the point $(x+y, H^0(\L \otimes \str_{x+y}) \to H^0(\L \otimes \str_x)) \in \PE$.  Call this point $P_y$.  

Consider the commutative diagram of tangent spaces
\begin{equation*}
\xymatrix@=17pt {T_{P_y} \tilde{\ell} \ar[r]^{\cong} \arinj[d] & T_x \ell \arinj[d] \\ T_{P_y} \PE \ar[r]^{df} & T_x \P^r},
\end{equation*}
where the top horizontal map is an isomorphism since $f$ is an isomorphism on $\tilde{\ell}$.  Let $v \in T_{P_y} \tilde{\ell}$ be a nonzero vector.  Looking at the above diagram, $df(v)$ is nonzero and sits inside $T_x \ell$.  Thus, since $\ell$ is not contained in $H$, we know that $$\langle f^*w, v \rangle_{P_y} = \langle w, df(v) \rangle_x \neq 0,$$  which means that $f^*w \neq 0$.

Notice that the pullback map $T^*_x \P^r \to T_{P_y}^* \PE $ factors through $H^0(\U)$ as follows:
\begin{equation*}
\xymatrix@=20pt{T_x^*\P^r \ar[r]^{f^*} \ar[d]_{\alpha_{x,1}} & T^*_{P_y} \PE \\ H^0\left( \U\right) \ar[r]^{\textrm{restr.}} & H^0\left( \U \big|_{P_y}\right) \arinj[u]}
\end{equation*}
Thus, since $f^*w \neq 0$, we know $\alpha_{x,1}(w) \neq 0$.  Thus, $\alpha_{x,1}$ is injective.

Now to show that $\alpha_{x,1}$ is an isomorphism, we show that $T^*_x \P^r$ and $H^0\left(\U \right)$ have the same dimension.  

First of all, $$\dim T^*_x \P^r = r = h^0(\L) -1. $$  

Next, by Lemma \ref{splitlem}, $$h^0\left(\U \right) = h^0(\str_{F_x}^n) + h^0 (b_x^*\L(-2E_x)).$$  Of course, $h^0(\str_{F_x}^n) = n$.  To calculate $h^0 (b_x^*\L(-2E_x))$, consider the natural short exact sequence
$$0 \to \str_{F_x}(-2E_x) \to \str_{F_x} \to \str_{2E_x} \to 0.$$  Tensoring by $b_x^*\L$ and taking cohomology  yields $$0 \to H^0(b_x^*\L(-2E_x)) \to H^0(b_x^*\L) \to H^0(b_x^*\L\otimes \str_{2E_x}) \to \cdots.$$  Pushing forward, the second map on global sections is equal to the map $$H^0(\L) \to H^0(\L \otimes \str/m_x^2),$$ which is surjective by very ampleness of $\L$.  Thus, $$h^0 (b_x^*\L(-2E_x)) = h^0(\L) - h^0(\L \otimes \str/m_x^2) = h^0(\L) - (n+1).$$  So $$ h^0\left(\U \right) = n + h^0(\L) - n -1 = h^0(\L)-1 = \dim T^*_x \P^r,$$ as desired, which completes the proof.

\end{proof}

Now we prove the main theorem by showing that the higher $\alpha_{x,k}$ are surjective.

\begin{proof}[Proof of Theorem D]
By Lemma \ref{alpha}, showing that $$\alpha_{x,k} : \Sym^k(T_x^*\P^r) \to H^0(\Sym^k \U)$$ is surjective will prove the theorem.

Notice that we can build $\alpha_{x,k}$ from $\alpha_{x,1}$ as follows:
$$\xymatrix@=50pt {\Sym^k(T_x^*\P^r) \ar[r]^{\Sym^k \alpha_{x,1}} \ar[dr]_{\alpha_{x,k}} & \Sym^k H^0(\U) \ar[d]\\
& H^0(\Sym^k \U)
},$$ where the vertical map is the natural one.  By Lemma \ref{alphaone}, $\alpha_{x,1}$ is an isomorphism, so the induced map $\Sym^k \alpha_{x,1}$ must be as well.  Thus, $\alpha_{x,k}$ is surjective if and only if $$\Sym^k(H^0(\U)) \to H^0(\Sym^k \U)$$ is surjective.

By Lemma \ref{splitlem}, $$\Sym^k\left(H^0(\U)\right) \cong \Sym^k \left( H^0(\str_{F_x})^{\oplus n} \oplus H^0(b_x^*\L (-2E_x)) \right)$$ and $$H^0\left(\Sym^k \U\right) \cong H^0\left( \Sym^k \left( \str_{F_x}^{\oplus n} \oplus (b_x^*\L (-2E_x))\right) \right).$$

By construction of the map, $$\Sym^k(H^0(\U)) \to H^0(\Sym^k \U)$$ decomposes as the sum of maps of the form $$\Sym^i H^0 (b_x^*\L (-2E_x)) \to H^0\left((b_x^*\L (-2E_x))^{\otimes i}\right).$$  These maps are surjective for all $i$ if and only if $b_x^*\L (-2E_x)$ is normally generated, which is equivalent to the hypothesis of the theorem.  Thus, $$\Sym^k(H^0(\U)) \to H^0(\Sym^k \U)$$ is surjective, and we are done.
\end{proof}

\section{Corollaries}
In this section, we show how Theorem D can be applied to yield several results of geometric interest.

\begin{proof}[Proof of Corollary A]
Let $D$ be an effective divisor on $X$ of degree 4.  Then both $\L$ and $\L(-D)$ have degree greater than $2g-2$, so they are both non-special.  Thus, Riemann-Roch implies that $$h^0(\L(-D)) = h^0(\L)-4.$$  Thus, $\L$ is 3-very ample.

Let $x \in X$.  Then $$\deg\L(-2x) \geq 2g+1.$$  A classical result of Castelnuovo, Mattuck \cite{M}, and Mumford \cite{M2} states that line bundles on curves with degree at least $2g+1$ are normally generated, which means the maps in the hypothesis of the theorem are surjective, as desired.
\end{proof}

Next, we prove the corollary involving canonical curves.  Note that this example is not covered by Corollary A.

\begin{proof}[Proof of Corollary B]
Let $c = \textrm{Cliff}(X).$  The following classification is given in \cite{ELMS}:

\noindent $c=0 \iff X$ is hyperelliptic.

\noindent $c=1 \iff X$ has a $g^1_3$ or $X$ is a plane quintic.

\noindent $c =2 \iff X$ has a $g^1_4$ or $X$ is a plane sextic.

\noindent Thus, $c \geq 3$ if and only if $X$ has no $g_4^1$ and is not a plane sextic.

First we will show that $\omega_X$ is 3-very ample.  Let $D$ be an effective divisor of degree 4.  Then Riemann-Roch gives $$h^0(\omega_X (-D)) = h^0(D) + (2g-2-4) - g +1 = h^0(\omega_X) + h^0(D) -5.$$  Thus, $\omega_X$ is 3-very ample if and only if $h^0(D)=1$, i.e. $X$ has no $g^1_4$, which follows from the hypothesis.

Next we show that $\omega_X (-2x)$ is normally generated.  A theorem of Green and Lazarfeld (Theorem 1 in \cite{GL}) states that if $\L$ is very ample, and $$\deg \L \geq 2g+1 - 2 h^1(\L) - c,$$ then $\L$ is normally generated.  In the situation of interest, $\deg \omega_X (-2x) = 2g-4$, and by Serre duality $h^1(\omega_X (-2x)) = h^0(2x)$, which is 1 since $X$ is not hyperelliptic.  Thus, the Green-Lazarsfeld theorem implies $\omega_X(-2x)$ is normally generated as long as $c \geq 3.$
\end{proof}

In the above proof, the lack of a $g^1_4$ was equivalent to 3-very ampleness.  However, $c \geq 3$ merely implies the normal generation condition.  This raises the question: do we need the hypothesis that $X$ is not a plane sextic, or does the lack of a $g^1_4$ suffice?  In fact, if $X$ is a plane sextic, $\omega(-2x)$ is not normally generated.  This follows from a proof analogous to the one for Lemma 2.2 of \cite{K}, setting $D=\omega_X(-2x)$ and $k=2$.  We won't restate the proof, as it is nearly identical to Konno's proof except we replace $\ell$ with a line tangent to $X$ at $x$ and blow up twice at the intersection of $X$ and $\ell$ rather than once.  Thus, to satisfy the hypotheses of our theorem, it is necessary that $X$ is not a plane sextic.  However, our theorem only gives sufficient conditions for normality, so we ask the following question:

\begin{quest}
If $X$ is a smooth plane sextic, is $\Sigma(X,\omega_X)$ a normal variety?
\end{quest}

Now we turn to our final corollary, which deals with higher dimensional $X$.

\begin{proof}[Proof of Corollary C]
When $n=1$, $\L$ already has sufficiently high degree so that it satisfies the hypothesis of Corollary A.  We will assume from now on that $n$ is at least 2.

Example 1.8.23 of \cite{L2} states that $\omega_X \otimes  \A^{\otimes k} \otimes \B$ is very ample when $k \geq n+2$.  The main theorem of \cite{HTT} says that the product of an $i$-very ample line bundle with a $j$-very ample line bundle will be $(i+j)$-very ample.  Thus $\omega_X \otimes  \A^{\otimes k} \otimes \B$ will be 3-very ample for $k \geq n+4$.  For $n \geq 2$, we have $2(n+1) \geq n+4$, so $\L =\omega_X \otimes  \A^{\otimes 2(n+1)} \otimes \B$ must be 3-very ample.

Now we check the remaining hypotheses on $\widetilde{X} = bl_x X$.  First we calculate $b_x^*\L(-2E_x)$.
$$b_x^*\L = b_x^*\omega_X \otimes  b_x^*\A^{\otimes 2(n+1)} \otimes b_x^*\B = \omega_{\widetilde{X}}\otimes \str_{\widetilde{X}}(-(n-1)E) \otimes  b_x^*\A^{\otimes 2(n+1)} \otimes b_x^*\B.$$ Thus, we get 
$$b_x^*\L(-2E_x) = \omega_{\widetilde{X}} \otimes  b_x^*\A^{\otimes 2(n+1)}\otimes \str_{\widetilde{X}}(-(n+1)E_x) \otimes b_x^*\B 
= \omega_{\widetilde{X}} \otimes  (b_x^*\A^{\otimes 2}(-E_x))^{\otimes (n+1)} \otimes b_x^*\B .$$
$\A$ is very ample, so it is the restriction of $\str(1)$ of the corresponding projective space $\P^m$.  Consider the blowup $\widetilde{\P^m}$ of $\P^m$ at $x\in X$. It is well-known that $2\widetilde{H}-E$ is very ample, where $\widetilde{H}$ is the pullback of a hyperplane. Thus, $$\str_{\widetilde{X}}(2\widetilde{H}-E) = b_x^*\A^{\otimes 2}(-E_x)$$ is also very ample.  Furthermore, the pullback of a nef line bundle is again nef.  A theorem of Ein and Lazarsfeld in \cite{EL} states that line bundles of the form $\omega \otimes \mathcal{M}^{\otimes (n+1)} \otimes \mathcal{N}$, where $\mathcal{M}$ is very ample and $\mathcal{N}$ is nef, are normally generated.  Thus, $b_x^*\L(-2E_x)$ is normally generated, so $\sct$ must be normal.
\end{proof}

 \bibliographystyle{plain}
\bibliography{bibliography.bib}

\end{document}